\numberwithin{equation}{section}
\newtheorem{theorem}{Theorem}[section]
\newtheorem{corollary}{Corollary}[section]
\newtheorem{lemma}{Lemma}[section]
\newtheorem{example}{Example}[section]
\theoremstyle{remark}
\title[Further results for a subclass of univalent functions]
 {Further results for a subclass of univalent functions related with differential equation}
\subjclass[2010]{30C45}
\keywords{Univalent; Starlike; Convex; Close--to--convex; Fekete--Szeg\"{o} problem; Coefficient estimates; Toeplitz determinant.\\
*Corresponding Author}
\begin{document}
\begin{abstract}
Let $\Omega$ denote the class of functions $f$ analytic in the open unit disc $\Delta$, normalized by the condition $f(0)=f'(0)-1=0$ and satisfying the inequality
\begin{equation*}
  \left|zf'(z)-f(z)\right|<\frac{1}{2}\quad(z\in\Delta).
\end{equation*}
The class $\Omega$ was introduced recently by Peng and Zhong (Acta Math Sci {\bf37B(1)}:69--78, 2017). Also let $\mathcal{U}$ denote the class of functions $f$ analytic and normalized in $\Delta$ and satisfying the condition
\begin{equation*}
  \left|\left(\frac{z}{f(z)}\right)^2f'(z)-1\right|<1\quad(z\in\Delta).
\end{equation*}
In this article, we obtain some further results for the class $\Omega$ including, an extremal function and more examples of $\Omega$, inclusion relation between $\Omega$ and $\mathcal{U}$, the radius of starlikeness, convexity and close--to--convexity and sufficient condition for function $f$ to be in $\Omega$. Furthermore, along with the settlement of the coefficient problem and the Fekete--Szeg\"{o} problem for the elements of $\Omega$, the Toeplitz matrices for $\Omega$ are also discussed in this article.
\end{abstract}
\author[H. Mahzoon, R. Kargar]
       {H. Mahzoon and R. Kargar$^*$}

\address{Department of Mathematics, Islamic Azad University, Firoozkouh
Branch, Firoozkouh, Iran}
\email {mahzoon$_{-}$hesam@yahoo.com {\it (H. Mahzoon)}}
\address{Young Researchers and Elite Club,
Ardabil Branch, Islamic Azad University, Ardabil, Iran}
       \email{rkargar1983@gmail.com {\it (R. Kargar)}}

\maketitle
\section{Introduction}
Let $\mathcal{A}$ denote the family of functions $f$ of the form
\begin{equation}\label{f(z)=z+a2z2+}
  f(z)=z+\sum_{n=2}^{\infty} a_n z^n\quad(z\in \Delta),
\end{equation}
which are analytic and normalized by the condition $f(0)=f'(0)-1=0$ in the open unit disc $\Delta:=\{z\in\mathbb{C}:|z|<1\}$ and $\mathcal{S}$ denote its subclass of univalent functions. We say that the function $f\in\mathcal{A}$ is starlike in $\Delta$ if $f(\Delta)$ is a set that starlike with respect to the origin. In other words, the straight line joining any point in $f(\Delta)$ to the origin lies in $f(\Delta)$. This means that $tz_0\in f(\Delta)$ when $z_0\in f(\Delta)$ and $0\leq t\leq1$. We denote this set of functions by $\mathcal{S}^*$. The well--known analytic description of starlike functions in terms of functions with positive real part states that $f\in\mathcal{S}^*$ if, and only, if
\begin{equation*}
  {\rm Re}\left\{\frac{zf'(z)}{f(z)}\right\}>0\quad(z\in\Delta).
\end{equation*}
We say that a set $\Lambda$ is convex if the line segment joining any two points in $\Lambda$ lies in $\Lambda$. This means that $tz_0+(1-t)z_1\in\Lambda$ where $z_0,z_1\in\Lambda$ and $0\leq t\leq1$. A function $f\in\mathcal{A}$ is called convex if $f(\Delta)$ is a convex set. The set of convex functions in $\Delta$ is denoted by $\mathcal{K}$. Analytically, $f\in\mathcal{K}$ if, and only, if
\begin{equation*}
  {\rm Re}\left\{1+\frac{zf''(z)}{f'(z)}\right\}>0\quad(z\in\Delta).
\end{equation*}
The classes $\mathcal{S}^*$ and $\mathcal{K}$ were introduced by Robertson, see \cite{ROB}. We have $\mathcal{K}\subset \mathcal{S}^*\subset\mathcal{S}$, see \cite{Duren}. A function $f\in\mathcal{A}$ is said to be close--to--convex function, if there exists a convex function $g$ such that
\begin{equation*}
  {\rm Re}\left\{\frac{f'(z)}{g'(z)}\right\}>0\quad(z\in\Delta).
\end{equation*}
 By the Noshiro--Warschawski theorem \cite[Theorem 2.16]{Duren}, every close--to--convex function is univalent. Also, this theorem is one of the important criterion for univalence.
Let $\mathcal{U}$ denote the class of all functions $f\in\mathcal{A}$ satisfying the following inequality
\begin{equation*}
  \left|\left(\frac{z}{f(z)}\right)^2f'(z)-1\right|<1\quad(z\in\Delta).
\end{equation*}
It is well--known that $\mathcal{U}\subset \mathcal{S}$, see \cite{Aksentiev}. It's worth mentioning that the Koebe function $k(z)=z/(1-z)^2$ belongs to the class $\mathcal{U}$ although functions in $\mathcal{U}$ are not necessarily starlike in $\Delta$, see \cite{FP2007, MP2007}.
For more details about the class $\mathcal{U}$ one can refer to \cite{kessiberian, MP2001, ObPo(AML), MP2013, MP2012, ozaki}.

Lately, Peng and Zhong \cite{peng2017}, introduced and discussed a new subclass of analytic functions as follows
\begin{equation*}
  \Omega:=\left\{f\in \mathcal{A}:\left|zf'(z)-f(z)\right|<\frac{1}{2}, z\in \Delta\right\}.
\end{equation*}
The class $\Omega$ is a subclass of the starlike functions \cite[Theorem 3.1]{peng2017}. The main motive for defining the class $\Omega$ is the relationship between the class $\Omega$ and the class $\mathcal{U}$, see for more details \cite{peng2017}.
Also, they have investigated some properties for the class $\Omega$, such as
\begin{itemize}
  \item growth and distortion theorem;
  \item $\Omega$ is a subset of the starlike functions;
  \item the radius of convexity;
  \item if $f,g\in \Omega$, then $f*g\in \Omega$, where "*" is the well--known Hadamard product;
  \item $\Omega$ is a closed convex subset of $\mathcal{A}$;
  \item and properties support point and extreme point of $\Omega$.
\end{itemize}
Peng and Zhong estimated the coefficients of function $f$ of the form \eqref{f(z)=z+a2z2+} belonging to the class $\Omega$, but there was no mention of the proof and its accuracy. In this article we give sharp estimates for the coefficients of functions $f$ belonging to the class $\Omega$.

Very recently, also Obradovi\'{c} and Peng (see \cite{OPeng2017}) studied the class $\Omega$ and obtained two sharp sufficient conditions for the function $f$ to be in the class $\Omega$ as follows:
\begin{itemize}
  \item if $|f''(z)|\leq1$, then $f\in \Omega$;
  \item if $|z^2f''(z)+zf'(z)-f(z)|\leq 3/2$, then $f\in \Omega$.
\end{itemize}
Following, we give another sufficient condition for functions $f$ to be in the class $\Omega$.

The function $f\in\mathcal{A}$ is subordinate to the function $g\in\mathcal{A}$, written as $f(z)\prec g(z)$ or $f\prec g$, if there exists an analytic function $w$, known as a Schwarz function, with $w(0)=0$ and $|w(z)|\leq|z|$, such that $f(z)=g(w(z))$ for all
$z\in\Delta$. Moreover, if $g\in \mathcal{S}$, then we have the following equivalence (c.f. \cite{MM-book})
\begin{equation*}
    f (z)\prec g(z) \Leftrightarrow f(0)=g(0)\quad {\rm and}\quad f (\Delta)\subset g(\Delta).
\end{equation*}

The structure of the paper is the following. In Section \ref{sec. exas lems} we give an extremal function for the class $\Omega$ and solve an open question related to the
inclusion relation between $\Omega$ and $\mathcal{U}$, partially. In Section \ref{sec. radius} some radius problems for the function $f\in\Omega$ are obtained. In Section \ref{sec. condition} we present two conditions for functions $f$ to be in the class $\Omega$. In Section \ref{sec. coeffi} we study the coefficients of the function $f$ of the form \eqref{f(z)=z+a2z2+} belonging to the class $\Omega$. Finally, in Section \ref{sec. fek-sze}, the Fekete-Szeg\"{o} problem and Toeplitz matrices are investigated.

\section{Extremal function and inclusion relation}\label{sec. exas lems}
First, we give an example for the class $\Omega$ which is an extremal function for several problems.
\begin{example}\label{exa. widetilde f}
  Let
 \begin{equation}\label{fwid}
  \widetilde{f}_n(z):=z+\frac{1}{2(n-1)}z^n \quad(n=2,3,\ldots, z\in\Delta).
 \end{equation}
  It is clear that $\widetilde{f}_n\in \mathcal{A}$ and
  \begin{equation*}
    z\widetilde{f}_n'(z)-\widetilde{f}_n(z)=\frac{1}{2}z^n\quad(n=2,3,\ldots, z\in\Delta).
  \end{equation*}
  Since $z\in\Delta$, thus $ |z\widetilde{f}_n'(z)-\widetilde{f}_n(z)|=|z^n/2|<1/2$ and consequently $\widetilde{f}_n\in \Omega$. We remark that the function $\widetilde{f}_n$ is univalent in $\Delta$ for $n=2,3,\ldots$. The function $\widetilde{f}_n$ is an extremal function for several problems such as, coefficient estimates, the radius of convexity and starlikeness in the class $\Omega$.
 \end{example}
 This is an open question whether $\mathcal{U}\subset \Omega$ or $\Omega\subset\mathcal{U}$? We solve this question partially. The following Example \ref{ex2} shows that $\Omega\subset\mathcal{U}$ and $\mathcal{U}\not\subset\Omega$.
 \begin{example}\label{ex2}
    Let $\widetilde{f}_n$ $(n=2,3,\ldots)$ be defined by \eqref{fwid}. Then by Example \ref{exa. widetilde f} we have $\widetilde{f}_n\in\Omega$ for $n=2,3,\ldots$. In particular, if we take $n=2$, then the bounded analytic function $\widetilde{f}_2=z+z^2/2$ belongs to $\mathcal{U}$, see \cite[p. 175]{MP2001}. Also, the function $\widetilde{f}_3=z+z^3/4$ belongs to $\mathcal{U}$, too. Because
   \begin{equation*}
     \left(\frac{z}{\widetilde{f}_3(z)}\right)^2\widetilde{f}_3'(z)
     =\frac{1+3z^2/4}{(1+z^2/4)^2}
   \end{equation*}
   and for all $z\in\Delta$ we have
   \begin{equation}\label{exa. estimate}
     0\leq \left|\left(\frac{z}{\widetilde{f}_3(z)}\right)^2\widetilde{f}_3'(z)-1\right|
     =\left|\frac{1+3z^2/4}{(1+z^2/4)^2}-1\right|
     <0.56<1.
   \end{equation}
   Now we consider the function $f_1$ 
   as follows
   \begin{equation*}\label{f lambda}
     \frac{z}{f_1(z)}=1+\frac{1}{2}z+\frac{1}{2}z^3\quad(z\in\Delta).
   \end{equation*}
   It is easy to see that $z/f_1(z)\neq0$ in $\Delta$ and
   \begin{equation*}
     \left(\frac{z}{f_1(z)}\right)^2f'_1(z)-1=-z^3.
   \end{equation*}
   Since $z\in\Delta$, thus $f_1$ belongs to $\mathcal{U}$. A simple calculation gives us
   \begin{equation*}
     zf'_1(z)-f_1(z)=-\frac{z^2+3z^4}{2+z+z^3}\quad(z\in\Delta).
   \end{equation*}
   Now, if we take $z_0=-2/3\in\Delta$, then
   \begin{equation*}
   |z_0f'_1(z_0)-f_1(z_0)|=\left|-\frac{z_0^2+3z_0^4}{2+z_0+z_0^3}\right|=1>1/2.
   \end{equation*}
   This shows that $f_1\not\in\Omega$. Therefore $\mathcal{U}\not\subset\Omega$.
 \end{example}
 \section{Radius problems}\label{sec. radius}
 Peng and Zhong \cite[Theorem 3.4]{peng2017} showed that the radius of convexity for the class $\Omega$ is $1/2$. Here, by use of the function \eqref{fwid}, we show that the result of  Peng and Zhong is sharp.
 \begin{example}
   The function $\widetilde{f}_n$ shows that the members of the class $\Omega$ are convex in the open disc $|z|<r$ where $r<1/2$. Thus the result of Theorem 3.4 of \cite{peng2017} is sharp.
 \end{example}
 \begin{proof}
   Let $\widetilde{f}_n$ be given by \eqref{fwid}. With a simple calculation, we get
   \begin{equation*}
     1+\frac{z\widetilde{f}_n''(z)}{\widetilde{f}_n'(z)}
     =1+\frac{n}{2}\frac{z^{n-1}}{1+\frac{n}{2(n-1)}z^{n-1}}\quad(n=2,3,\ldots).
   \end{equation*}
   Using the analytic definition of convexity, the radius
$r$ of convexity is the largest number $0<r<1$ for which
\begin{equation*}
  \min_{|z|=r}{\rm Re}\left\{1+\frac{z\widetilde{f}_n''(z)}{\widetilde{f}_n'(z)}\right\}\geq0.
\end{equation*}
Now, for every $r\in(0,1)$, we have
\begin{align*}
  {\rm Re}\left\{1+\frac{z\widetilde{f}_n''(z)}{\widetilde{f}_n'(z)}\right\}&={\rm Re}\left\{1+\frac{n}{2}\frac{z^{n-1}}{1+\frac{n}{2(n-1)}z^{n-1}}\right\}\\
  &\geq
  1-\frac{n}{2}\left|\frac{z^{n-1}}{1+\frac{n}{2(n-1)}z^{n-1}}\right| \\
  &\geq 1-\frac{n}{2}\frac{|z|^{n-1}}{1-\frac{n}{2(n-1)}|z|^{n-1}}\\
  &=1-\frac{n}{2}\frac{r^{n-1}}{1-\frac{n}{2(n-1)}r^{n-1}}=:\phi(r)\quad(|z|=r<1).
\end{align*}
It is easy to see that $\phi(r)>0$ if and only if
\begin{equation*}
  r<\left(\frac{2(n-1)}{n^2}\right)^{\frac{1}{n-1}}=:r_0\quad(n=2,3,\ldots).
\end{equation*}
We note that if we put $n=2$, then $r_0$ becomes $1/2$ and if $n\rightarrow \infty$, then $r_0\rightarrow 1$. This is the end of proof.
 \end{proof}
 In the next result, with other proof we show that $\Omega\subset\mathcal{S}^*$.
\begin{lemma}\label{lem. rad. star}
  Every function $f\in\Omega$ is a starlike univalent function in the open unit disc $\Delta$.
\end{lemma}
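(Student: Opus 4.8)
The plan is to verify the analytic starlikeness criterion ${\rm Re}\{zf'(z)/f(z)\}>0$ directly from the defining inequality. Writing $g(z):=zf'(z)-f(z)$, the hypothesis $f\in\Omega$ means exactly $|g(z)|<1/2$ on $\Delta$, and since $f(z)=z+\sum_{n\ge2}a_nz^n$ we have $g(z)=\sum_{n\ge2}(n-1)a_nz^n$, so $g$ vanishes to order at least two at the origin. Wherever $f\ne0$ the algebraic identity
\begin{equation*}
  \frac{zf'(z)}{f(z)}-1=\frac{zf'(z)-f(z)}{f(z)}=\frac{g(z)}{f(z)}
\end{equation*}
holds, so it suffices to prove $|g(z)/f(z)|<1$: this forces $zf'(z)/f(z)$ into the disc $\{w:|w-1|<1\}$, which lies in the right half-plane, and univalence then follows from $\mathcal{S}^*\subset\mathcal{S}$.

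To control the numerator I would first sharpen the crude bound $|g|<1/2$. Since $g$ has a double zero at the origin, $G(z):=g(z)/z^2$ is analytic in $\Delta$. The maximum modulus principle on the disc $|z|\le r<1$ gives $|G(z)|\le\max_{|\zeta|=r}|g(\zeta)|/r^2\le 1/(2r^2)$, and letting $r\to1^-$ yields $|G(z)|\le1/2$, that is
\begin{equation*}
  |g(z)|\le\tfrac12|z|^2\qquad(z\in\Delta).
\end{equation*}
This refined growth estimate for $g$ is the technical heart of the argument.

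Next I would obtain a matching lower bound for the denominator. With $p(z):=f(z)/z$ one computes $p'(z)=(zf'(z)-f(z))/z^2=G(z)$, hence $|p'(z)|\le1/2$. Integrating along the radius from $0$ to $z$ and using $p(0)=1$ gives
\begin{equation*}
  \left|\frac{f(z)}{z}-1\right|=\left|\int_0^z p'(\zeta)\,d\zeta\right|\le\tfrac12|z|<\tfrac12,
\end{equation*}
so $f(z)\ne0$ for $z\ne0$ and $|f(z)|>\tfrac12|z|$ on $\Delta$. Combining the two bounds,
\begin{equation*}
  \left|\frac{zf'(z)}{f(z)}-1\right|=\frac{|g(z)|}{|f(z)|}<\frac{\tfrac12|z|^2}{\tfrac12|z|}=|z|<1,
\end{equation*}
which is the required inequality and completes the argument.

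The only delicate step is the passage from $|g|<1/2$ to $|g(z)|\le\tfrac12|z|^2$, where one must justify the limit $r\to1$ in the maximum modulus estimate; equivalently this is a Schwarz-type lemma applied to $2g$, which is bounded by $1$ and vanishes to second order. Everything afterwards is elementary, and the whole chain is self-contained, independent of the growth and distortion theorem of Peng and Zhong, in keeping with the intention to give ``another proof'' that $\Omega\subset\mathcal{S}^*$.
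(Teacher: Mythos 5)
Your proof is correct, and at its core it follows the same skeleton as the paper's: both arguments place $zf'(z)/f(z)$ in the disc $\{w:|w-1|<1\}$ by bounding the numerator $zf'(z)-f(z)$ above and the denominator $f(z)$ below, then invoke the analytic starlikeness criterion. The genuine difference is where the key quadratic-decay estimate comes from. The paper quotes Peng and Zhong's representation \cite[Eq.~(3.4)]{peng2017}: $f\in\Omega$ if and only if $f(z)=z+\tfrac12 z\int_0^z\varphi(\zeta)\,{\rm d}\zeta$ with $|\varphi(z)|\le1$, which immediately yields $zf'(z)-f(z)=\tfrac12 z^2\varphi(z)$, hence $|zf'(z)-f(z)|\le\tfrac12|z|^2$ and $|f(z)|\ge r-\tfrac12 r^2$ on $|z|=r$, and the conclusion ${\rm Re}\{zf'(z)/f(z)\}>1-\frac{r}{2-r}>0$. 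You re-derive precisely these two facts from first principles: the numerator bound by the Schwarz-type argument applied to $G(z)=g(z)/z^2$ (your $2G$ is exactly the paper's $\varphi$), and the denominator bound by integrating $p'=G$ where $p=f/z$, giving $|f(z)/z-1|\le\tfrac12|z|$, which is the same lower bound $|f(z)|\ge|z|(1-\tfrac12|z|)$ before you weaken it to $|f(z)|>\tfrac12|z|$. So the mathematical content is identical, but your version is self-contained (it does not lean on the cited representation formula), which is a modest expository gain, while the paper's version is shorter because the representation is taken as a black box; your final inequality $|zf'(z)/f(z)-1|<|z|$ is in fact slightly cleaner than the paper's closing estimate.
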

\begin{proof}
 By \cite[Eq. (3.4)]{peng2017}, $f$ belongs to the class $\Omega$ if, and only if,
  \begin{equation}\label{rep. f}
    f(z)=z+\frac{1}{2}z\int_{0}^{z}\varphi(\zeta){\rm d}\zeta,
  \end{equation}
  where $\varphi\in \mathcal{A}$ and $|\varphi(z)|\leq1$ ($z\in\Delta$).
  Now from \eqref{rep. f}, we have
  \begin{equation*}
    \frac{zf'(z)}{f(z)}=1+\frac{1}{2}\frac{z^2 \varphi(z)}{z+\frac{1}{2}z^2\int_{0}^{1}\varphi(zt){\rm d}t}\quad(0\leq t\leq1).
  \end{equation*}
  Therefore by the analytic definition of starlikeness, we get
  \begin{align*}
    \min_{|z|=r}{\rm Re}\left\{\frac{zf'(z)}{f(z)}\right\} &\geq 1-\frac{1}{2}
    \left|\frac{z^2 \varphi(z)}{z+\frac{1}{2}z^2\int_{0}^{1}\varphi(zt){\rm d}t}\right|\\
    &>1-\frac{r}{2-r}\quad(|z|=r<1).
  \end{align*}
  It is easy to see that $1-\frac{r}{2-r}>0$ when $0<r<1$ and concluding the proof.
\end{proof}
In the sequel, we will show that the class $\Omega$ is a subclass of close--to--convex functions.
 \begin{lemma}
   Every function $f\in\mathcal{A}$ which belongs to the class $\Omega$ is close--to--convex in $\Delta$.
 \end{lemma}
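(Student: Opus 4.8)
The plan is to verify the definition of close--to--convexity directly by exhibiting a single convex comparison function that works for every $f\in\Omega$, namely the identity $g(z)=z$. Since $g'(z)=1$ and $1+zg''(z)/g'(z)=1>0$, the map $g$ is trivially convex, so the whole problem collapses to showing that ${\rm Re}\{f'(z)\}>0$ throughout $\Delta$. Thus my strategy is to prove this half--plane property for the derivative of an arbitrary $\Omega$--function.

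To establish ${\rm Re}\{f'(z)\}>0$ I would start from the integral representation \eqref{rep. f}, which characterizes membership in $\Omega$. Differentiating $f(z)=z+\frac{1}{2}z\int_{0}^{z}\varphi(\zeta)\,{\rm d}\zeta$ yields
\begin{equation*}
  f'(z)=1+\frac{1}{2}\int_{0}^{z}\varphi(\zeta)\,{\rm d}\zeta+\frac{1}{2}z\varphi(z),
\end{equation*}
where $\varphi\in\mathcal{A}$ with $|\varphi(z)|\leq1$ on $\Delta$. Next I would bound the two variable terms. After the substitution $\zeta=zt$ with $0\leq t\leq1$ one has $\int_{0}^{z}\varphi(\zeta)\,{\rm d}\zeta=z\int_{0}^{1}\varphi(zt)\,{\rm d}t$, so that each of the quantities $\left|\frac{1}{2}\int_{0}^{z}\varphi(\zeta)\,{\rm d}\zeta\right|$ and $\left|\frac{1}{2}z\varphi(z)\right|$ is at most $\frac{1}{2}|z|$. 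Adding these estimates gives
\begin{equation*}
  |f'(z)-1|\leq|z|<1\quad(z\in\Delta),
\end{equation*}
whence ${\rm Re}\{f'(z)\}\geq1-|z|>0$.

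With this in hand the conclusion is immediate: taking $g(z)=z$ gives ${\rm Re}\{f'(z)/g'(z)\}={\rm Re}\{f'(z)\}>0$, which is precisely the close--to--convexity condition, so $f$ is close--to--convex in $\Delta$.

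I do not expect a serious obstacle here; the only points needing care are keeping the estimate strict (which is automatic since $|z|<1$ on the open disc) and confirming that $g(z)=z$ genuinely qualifies as the convex function in the definition. I would also remark that the derived inequality ${\rm Re}\{f'(z)\}>0$ re--proves univalence via the Noshiro--Warschawski theorem, and that the result is consistent with the classical inclusion of the starlike class into the close--to--convex class together with Lemma \ref{lem. rad. star}; the direct argument above is preferable because it is self--contained and exhibits an explicit comparison function.
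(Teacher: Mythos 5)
Your proposal is correct and follows essentially the same route as the paper: both differentiate the representation \eqref{rep. f}, bound the two terms by $\tfrac{1}{2}|z|$ each using $|\varphi(z)|\leq 1$, and conclude ${\rm Re}\{f'(z)\}\geq 1-|z|>0$. The only cosmetic difference is that you state explicitly that $g(z)=z$ serves as the convex comparison function, which the paper leaves implicit.
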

 \begin{proof}
   Let the function $f\in\mathcal{A}$ belongs to the class $\Omega$. Then by \eqref{rep. f}, we get
   \begin{equation*}\label{rep. f'}
     f'(z)=1+\frac{1}{2}z\varphi(z)+\frac{1}{2}z\int_{0}^{1}\varphi(z t)dt\quad(0\leq t\leq1).
   \end{equation*}
   Since $|\varphi(z)|\leq1$, we have
   \begin{align*}
     {\rm Re}\{f'(z)\} &={\rm Re}\left\{1+\frac{1}{2}z\varphi(z)+\frac{1}{2}z\int_{0}^{1}\varphi(z t)dt\right\} \\
     &\geq 1-\left|\frac{1}{2}z\varphi(z)+\frac{1}{2}z\int_{0}^{1}\varphi(z t)dt\right|\\
     &\geq1-r\quad(|z|=r<1).
   \end{align*}
   The last inequality is non--negative if $r<1$ and concluding the proof.
 \end{proof}
 \section{Conditions for functions $f$ to be in $\Omega$}\label{sec. condition}
First, we give a sufficient condition for functions $f$ of the form \eqref{f(z)=z+a2z2+} to be in the class $\Omega$. We remark that since $\Omega$ is a subclass of the close--to--convex univalent functions, the following lemma also is a sufficient condition for univalence.
\begin{lemma}\label{lem suffi}
  Let $f\in\mathcal{A}$. If
  \begin{equation}\label{ineq. f in omega}
    \left|\left(\frac{f(z)}{z}\right)'\right|<\frac{1}{2}\quad(z\in\Delta, z\neq0),
  \end{equation}
  then $f\in\Omega$. The number $1/2$ is the best possible.
\end{lemma}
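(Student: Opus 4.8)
The plan is to reduce membership in $\Omega$ directly to the hypothesis through a single algebraic identity. First I would differentiate the quotient $f(z)/z$ and record that
\begin{equation*}
  \left(\frac{f(z)}{z}\right)'=\frac{zf'(z)-f(z)}{z^2},
\end{equation*}
so that $zf'(z)-f(z)=z^2\left(f(z)/z\right)'$. Since $f\in\mathcal{A}$, the quotient $f(z)/z=1+a_2z+\cdots$ extends analytically across the origin, hence its derivative is analytic on all of $\Delta$ and the identity holds throughout; at $z=0$ both sides of the $\Omega$-inequality vanish, so that point is trivial.

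Given this identity the forward implication is immediate and presents no real obstacle. For $z\neq 0$, taking moduli and using both $|z|<1$ and the hypothesis yields
\begin{equation*}
  \left|zf'(z)-f(z)\right|=|z|^2\left|\left(\frac{f(z)}{z}\right)'\right|<|z|^2\cdot\frac{1}{2}<\frac{1}{2}\qquad(z\in\Delta,\ z\neq0),
\end{equation*}
which is exactly the defining inequality of $\Omega$; hence $f\in\Omega$.

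For the sharpness assertion I would exhibit an explicit one-parameter test family, chosen so that the hypothesis and the $\Omega$-condition are governed by the same quantity. Consider $f_c(z)=z+\tfrac{c}{2}z^2$ with $c\ge 1$; then $\left(f_c(z)/z\right)'\equiv c/2$ and $zf_c'(z)-f_c(z)=\tfrac{c}{2}z^2$. The boundary case $c=1$ is precisely the extremal function $\widetilde{f}_2$ of Example \ref{exa. widetilde f}, for which $\left|\left(f_c/z\right)'\right|\equiv 1/2$ while $\sup_{z\in\Delta}|zf_c'-f_c|=1/2$; thus the constant $1/2$ is attained on the boundary of $\Omega$. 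To see that no larger constant is admissible, I would argue by contradiction: if the lemma held with $1/2$ replaced by some $M>1/2$, then choosing $c$ with $1<c<2M$ (possible since $2M>1$) gives $\left|\left(f_c/z\right)'\right|=c/2<M$, yet for $|z|^2\in(1/c,1)$ one has $|zf_c'(z)-f_c(z)|=\tfrac{c}{2}|z|^2>\tfrac12$, so $f_c\notin\Omega$, contradicting the hypothetical improved lemma.

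The only step demanding any care is the sharpness construction: the test family must couple the two inequalities through the single parameter $c/2$, and the strict estimates must be checked against the open-disc constraint $|z|<1$ (so that the critical radius $|z|^2\in(1/c,1)$ is genuinely available once $c>1$). Everything else is a one-line consequence of the quotient identity, so I do not anticipate a genuine obstacle.
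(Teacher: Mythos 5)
Your proposal is correct and follows essentially the same route as the paper: the key identity $z^2\left(f(z)/z\right)'=zf'(z)-f(z)$ together with $|z|^2<1$ gives membership in $\Omega$, and sharpness is tested on the quadratic family $z+\lambda z^2$ (your $\lambda=c/2$). Your contradiction argument for sharpness is in fact spelled out more carefully than the paper's, which merely asserts that $z+\lambda z^2\notin\Omega$ when $\lambda>1/2$, but the underlying idea is identical.
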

\begin{proof}
  Let $f\in\mathcal{A}$ satisfies the inequality \eqref{ineq. f in omega}. Since $z\in\Delta$ and consequently $|z|^2<1$, thus by the inequality \eqref{ineq. f in omega}, we get
  \begin{equation*}
    \left|z^2\left(\frac{f(z)}{z}\right)'\right|<\frac{1}{2}\quad(z\in\Delta, z\neq0).
  \end{equation*}
  Now the assertion follows from the following identity
\begin{equation}\label{indentity}
  z^2\left(\frac{f(z)}{z}\right)'=zf'(z)-f(z)\quad(z\in\Delta, z\neq0)
\end{equation}
and concluding $f\in\Omega$. For the sharpness, consider the function $\widehat{f}_\lambda(z)=z+\lambda z^2$ where $|\lambda|<1/2$ and $z\in\Delta$. A simple calculation gives that
\begin{equation*}
  \left(\frac{\widehat{f}_\lambda(z)}{z}\right)'=\lambda\quad(|\lambda|<1/2, z\in\Delta, z\neq0).
\end{equation*}
Therefore $\widehat{f}_\lambda\in\Omega$. It is easy to see that if $\lambda>1/2$, then $\widehat{f}_\lambda\not\in\Omega$. Also, since $\widehat{f}'_\lambda(z)=1+2\lambda z$ vanish at $-\frac{1}{2\lambda}$, we conclude that $\widehat{f}_\lambda$ is not univalent in $\Delta$ when $\lambda>1/2$. This is the end of proof.
\end{proof}
As an application of the Lemma \ref{lem suffi} we give another example for the class $\Omega$.
\begin{example}
  Define $f_{\gamma,\beta}(z)=z+\gamma z^2+\beta z^3$, where $\gamma$ and $\beta$ are two complex numbers. If $|\gamma|+2|\beta|<1/2$, then $f_{\gamma,\beta}(z)\in\Omega$. In particular, the function $\ell(z)=z+z^2/5+z^3/8$ belongs to the $\Omega$. We note that the function $\ell$ is univalent in the unit disc $\Delta$. The Figure \ref{fig:subfig01}(a) shows the image of $\Delta$ under the function $\ell(z)$.
\end{example}
Applying Lemma \ref{lem suffi}, we present a sufficient condition for the function $f_n(z)=z+a_n z^n$ ($n=2,3,\ldots$) to be in the class $\Omega$.
\begin{example}
  Consider the function $f_n(z)=z+a_n z^n$ where $z\in\Delta$. If
  \begin{equation}\label{an lemma}
    |a_n|<\frac{1}{2(n-1)}\quad(n=2,3,\ldots),
  \end{equation}
  then $f_n\in\Omega$.
\end{example}
\begin{proof}
  Let $f_n(z)=z+a_n z^n$ and the inequality \eqref{an lemma} holds. From \eqref{an lemma}, we get
  \begin{equation}\label{(n-1) an}
    (n-1)|a_n||z|^{n-2}<\frac{1}{2}\quad(z\in\Delta, n=2,3,\ldots).
  \end{equation}
  Since
  \begin{equation}\label{fn z prime}
    \left(\frac{f_n(z)}{z}\right)'=(n-1)a_n z^{n-2}\quad(z\in\Delta, n=2,3,\ldots),
  \end{equation}
  the inequalities \eqref{(n-1) an} and \eqref{fn z prime}, imply that
  \begin{equation*}
     \left|\left(\frac{f_n(z)}{z}\right)'\right|<\frac{1}{2}\quad(z\in\Delta, n=2,3,\ldots).
  \end{equation*}
  Now the desired result follows from the Lemma \ref{lem suffi}.
\end{proof}

In the sequel, we recall from \cite[p. 24]{MM-book}, the function $q_M(z)$ given by
\begin{equation}\label{q(z)}
  q_M(z)=M\frac{Mz+a}{M+\overline{a}z}\qquad(z\in \Delta),
\end{equation}
where $M>0$, $|a|<M$. We have $q_M(0)=a$ and $q_M(\Delta)=\{\zeta: |\zeta|<M\}=:\Delta_M$. By applying the function \eqref{q(z)} and by the subordination, we present a sufficient and necessary condition for functions $f$ to be in the class $\Omega$.
\begin{lemma}\label{lem sub f in Omega}
  Let $f\in\mathcal{A}$. Then $f\in \Omega$ if, and only if,
  \begin{equation}\label{sub f in Omega}
    zf'(z)-f(z)\prec q_{1/2}(z)\quad(z\in \Delta),
  \end{equation}
  where $q_{1/2}(z)$ is defined by \eqref{q(z)} when $M=1/2$ and $a=0$.
\end{lemma}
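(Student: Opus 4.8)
The plan is to reduce the asserted subordination to the elementary image characterization of subordination to a univalent dominant recalled in the Introduction. First I would evaluate the dominant explicitly: setting $M=1/2$ and $a=0$ in \eqref{q(z)} gives $q_{1/2}(z)=\tfrac12 z$, which is analytic and univalent in $\Delta$, with $q_{1/2}(0)=0$ and $q_{1/2}(\Delta)=\Delta_{1/2}=\{\zeta:|\zeta|<1/2\}$. I would also record that for $f\in\mathcal{A}$ the auxiliary function $g(z):=zf'(z)-f(z)=\sum_{n\geq2}(n-1)a_nz^n$ is analytic in $\Delta$ and satisfies $g(0)=0$, so in particular $g(0)=q_{1/2}(0)$ holds automatically.

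With these preliminaries the equivalence is immediate from the principle that, for a univalent dominant, subordination amounts to containment of images together with agreement of the values at the origin. Thus $g\prec q_{1/2}$ holds if, and only if, $g(0)=q_{1/2}(0)$ (already verified) and $g(\Delta)\subset q_{1/2}(\Delta)=\Delta_{1/2}$. The latter inclusion says precisely that $|zf'(z)-f(z)|<1/2$ for every $z\in\Delta$, which is the defining inequality of $\Omega$. This closes both implications at once.

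The one point requiring care is that the equivalence recalled in the Introduction is stated for a normalized dominant $g\in\mathcal{S}$, whereas $q_{1/2}(z)=z/2$ is univalent but not normalized. I would therefore either invoke the general form of the principle (valid for any univalent dominant, with no normalization needed) or, to keep the argument self-contained, establish the two implications by hand. For the nontrivial direction ($f\in\Omega\Rightarrow$ subordination) I would set $w(z):=2\bigl(zf'(z)-f(z)\bigr)$; then $w$ is analytic, $w(0)=0$, and $|w(z)|<1$ on $\Delta$, so Schwarz's lemma upgrades this to $|w(z)|\leq|z|$, making $w$ an admissible Schwarz function with $q_{1/2}(w(z))=w(z)/2=zf'(z)-f(z)$. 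The converse is trivial, since every value of $q_{1/2}$ lies in $\Delta_{1/2}$. I expect the Schwarz-lemma upgrade (from $|w|<1$ to $|w|\leq|z|$) to be the only genuinely substantive step, everything else being a direct unwinding of the definitions.
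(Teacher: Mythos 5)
Your proposal is correct and takes essentially the same route as the paper: identify $q_{1/2}(z)=z/2$, note that it is univalent with $q_{1/2}(\Delta)=\Delta_{1/2}$, and reduce the subordination to the image-containment form of the subordination principle, the containment $\left(zf'-f\right)(\Delta)\subset\Delta_{1/2}$ being exactly the defining inequality of $\Omega$. Your additional care about the dominant not being normalized (resolved via the explicit Schwarz function $w=2(zf'-f)$ and Schwarz's lemma) is a refinement the paper silently skips, but it does not change the underlying argument.
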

\begin{proof}
  If $f\in \Omega$, then by definition we have
  \begin{equation}\label{ineq. lem sub}
    |zf'(z)-f(z)|<\frac{1}{2}\quad(z\in \Delta).
  \end{equation}
  Thus, by \eqref{ineq. lem sub}, $zf'(z)-f(z)$ lies in the open disc $\Delta_{1/2}$ and it is clear that $q_{1/2}(\Delta)=\Delta_{1/2}$. Because $q_{1/2}$ is univalent, thus by the subordination principle, we get \eqref{sub f in Omega}. Indeed, since $f(z)=z+\sum_{n=2}^{\infty}a_n z^n$ and $zf'(z)-f(z)=\sum_{n=2}^{\infty}(n-1)a_n z^n$, we have $a=0$ in \eqref{q(z)}. This is the end of proof.
\end{proof}

\section{On coefficients}\label{sec. coeffi}
The first result of this section is the following.
\begin{theorem}\label{th. let phi}
  Let $\phi(z)=1+\sum_{n=1}^{\infty}c_n z^n$ be an analytic function in $\Delta$ and that satisfy the coefficient condition
  \begin{equation}\label{suffi condi phi}
    \sum_{n=1}^{\infty}n|c_{n}|<\frac{1}{2}.
  \end{equation}
  Then the function $f(z)=z\phi(z)$ belongs to the class $\Omega$.
\end{theorem}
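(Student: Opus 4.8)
The plan is to reduce the statement to the sufficient condition already established in Lemma \ref{lem suffi}. The key observation is that writing $f(z)=z\phi(z)$ is the same as saying $f(z)/z=\phi(z)$, so the quantity $\bigl(f(z)/z\bigr)'$ appearing in \eqref{ineq. f in omega} is nothing but $\phi'(z)$. Hence it will suffice to verify that $|\phi'(z)|<\frac{1}{2}$ for every $z\in\Delta$, and then Lemma \ref{lem suffi} will deliver $f\in\Omega$ at once.

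I would first record that $f=z\phi$ indeed belongs to $\mathcal{A}$: since $\phi(0)=1$, we have $f(0)=0$ and $f'(0)=\phi(0)=1$, so $f$ has the normalized form \eqref{f(z)=z+a2z2+}. Next I would differentiate the series $\phi(z)=1+\sum_{n=1}^{\infty}c_n z^n$ term by term, which is legitimate because $\phi$ is analytic on $\Delta$, to obtain $\phi'(z)=\sum_{n=1}^{\infty}n\,c_n z^{n-1}$. Then, using the triangle inequality together with the fact that $|z|^{n-1}\le 1$ for $|z|<1$ and all $n\ge 1$, I would estimate
\begin{equation*}
  |\phi'(z)|\le \sum_{n=1}^{\infty}n\,|c_n|\,|z|^{n-1}\le \sum_{n=1}^{\infty}n\,|c_n|<\frac{1}{2},
\end{equation*}
where the final inequality is exactly the hypothesis \eqref{suffi condi phi}.

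Having shown $\bigl|\bigl(f(z)/z\bigr)'\bigr|=|\phi'(z)|<\frac{1}{2}$ for all $z\in\Delta\setminus\{0\}$, the conclusion $f\in\Omega$ follows directly from Lemma \ref{lem suffi}. There is essentially no hard step here: the argument is a routine coefficient estimate coupled with the earlier sufficient condition. The only points requiring a little care are the justification of term-by-term differentiation (handled by analyticity of $\phi$ on the disc) and the passage from the weighted sum $\sum n|c_n|\,|z|^{n-1}$ to the hypothesis $\sum n|c_n|$, which relies on the bound $|z|^{n-1}\le 1$ valid throughout $\Delta$.
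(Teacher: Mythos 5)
Your proof is correct and takes essentially the same approach as the paper: the paper inlines the identity $z^2\left(\frac{f(z)}{z}\right)'=zf'(z)-f(z)$ and bounds $|zf'(z)-f(z)|\leq\sum_{n=1}^{\infty}n|c_n|<\frac{1}{2}$ directly, whereas you delegate that step to Lemma \ref{lem suffi} and bound $|\phi'(z)|$ instead, but the core series differentiation and triangle-inequality estimate is identical.
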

\begin{proof}
  Let $f$ be given by $f(z)=z\phi(z)$ where $\phi(z)=1+\sum_{n=1}^{\infty}c_n z^n$. We have
  \begin{equation}\label{frac f z=phi}
    \frac{f(z)}{z}=\phi(z)=1+\sum_{n=1}^{\infty}c_n z^n.
  \end{equation}
  Now by differentiating the above relation \eqref{frac f z=phi} and multiplying by $z^2$, we get
  \begin{equation*}
    z^2\left(\frac{f(z)}{z}\right)'=\sum_{n=1}^{\infty}n c_n z^{n+1}.
  \end{equation*}
  Therefore using the coefficient condition \eqref{suffi condi phi} and the identity \eqref{indentity}, we deduce that
  \begin{equation*}
    |zf'(z)-f(z)|=\left|z^2\left(\frac{f(z)}{z}\right)'\right|=\left|\sum_{n=1}^{\infty}n c_n z^{n+1}\right|<\sum_{n=1}^{\infty}n|c_n|<\frac{1}{2}
  \end{equation*}
  and concluding the proof.
\end{proof}
Theorem \ref{th. let phi} allows us to find many examples that belong to $\Omega$. For example, consider $\phi_1(z)=1-z/5-z^2/8$. We have $c_1=-1/5$, $c_2=-1/8$ and $c_3=c_4=\cdots=0$. Thus the coefficients of $\phi_1$ satisfy the condition \eqref{suffi condi phi} and we conclude that  the univalent function $f(z)=z\phi_1(z)=z-z^2/5-z^3/8$ belongs to the class $\Omega$. The Figure \ref{fig:subfig01}(b) shows the image of $\Delta$ under the function $\phi_1(z)$.
\begin{figure}[!ht]
\centering
\subfigure[]{
\includegraphics[width=5cm]{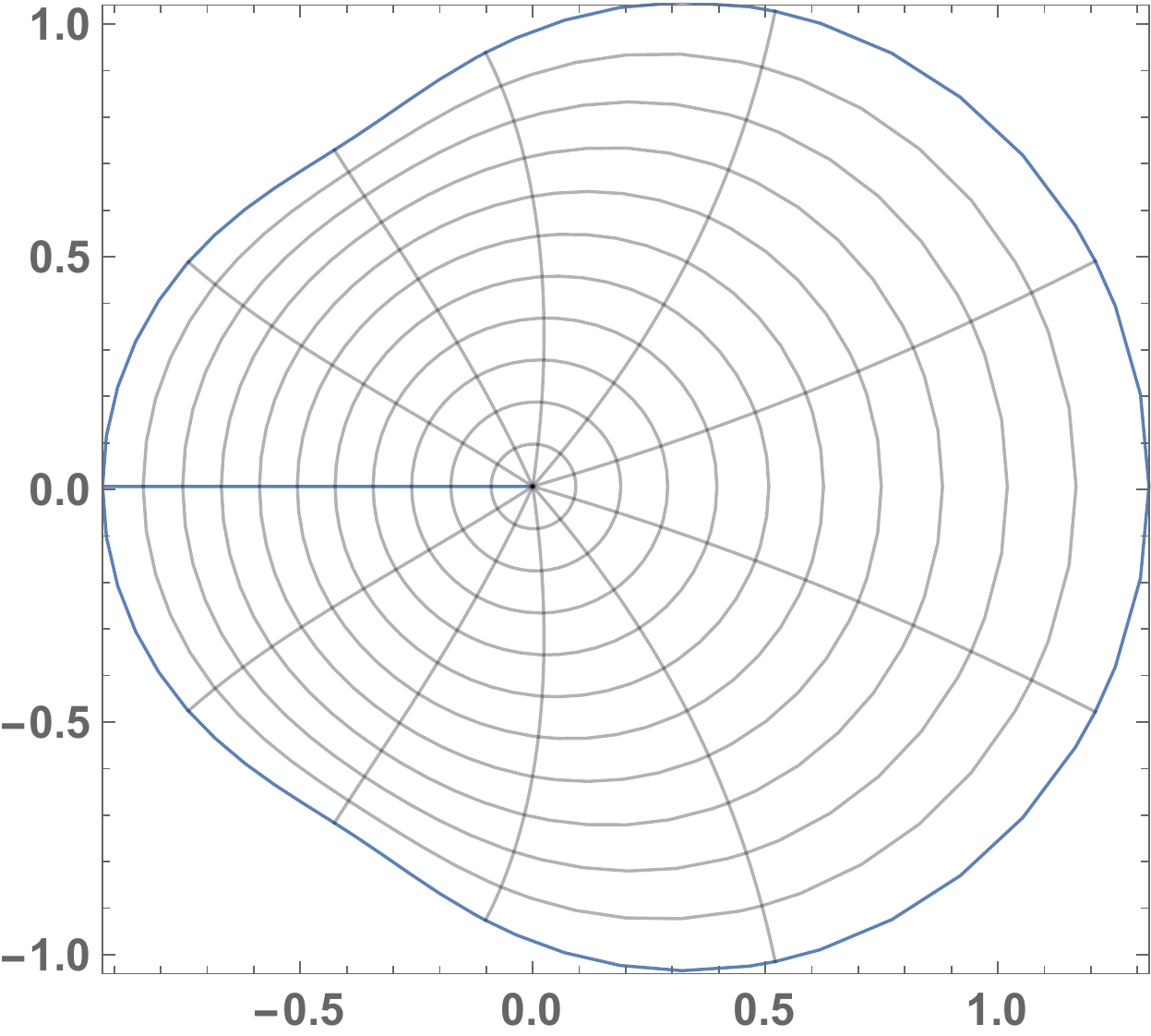}
	\label{fig:subfig1}
}
\hspace*{10mm}
\subfigure[]{
\includegraphics[width=5cm]{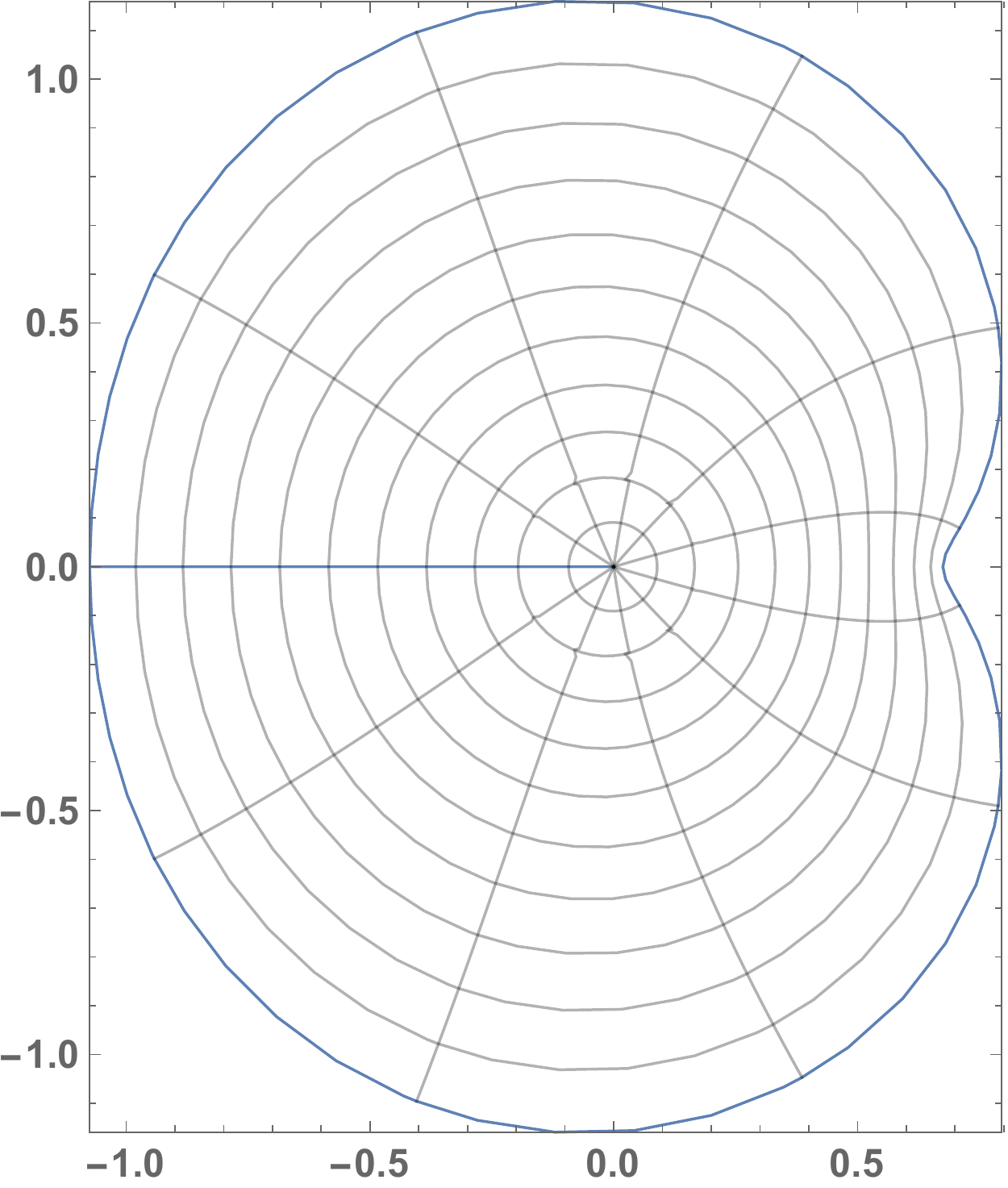}
	\label{fig:subfig2}
}

\caption[The boundary curve of $q_{1/2}(\Delta)$]
{\subref{fig:subfig1}: The image of $\Delta$ under the function $\ell(z)=z+z^2/5+z^3/8$  ،
 \subref{fig:subfig2}: The image of $\Delta$ under the function $\phi_1(z)=z-z^2/5-z^3/8$،
 }
\label{fig:subfig01}
\end{figure}

 Peng and Zhong \cite[Corollary 3.12]{peng2017} said that (without proof and sharpness) the inequality \eqref{cef. inq.} (bellow) holds for the coefficients of functions $f$ belonging to the class $\Omega$. Here, by use of the Lemma \ref{lem sub f in Omega} we present a simple proof for \eqref{cef. inq.}. We remark that the inequality \eqref{cef. inq.} is sharp.

 The following lemma due to Rogosinski \cite[2.3 Theorem X]{Rog} helps to estimate of coefficients.
 \begin{lemma}\label{Rog Lemma}
   Let $q(z)=\sum_{n=1}^{\infty}Q_n z^n$ be analytic and univalent in $\Delta$ such that maps $\Delta$ onto a convex domain. If $p(z)=\sum_{n=1}^{\infty}P_n z^n$ is analytic in $\Delta$ and satisfies the subordination $p(z)\prec q(z)$, then $|P_n|\leq|Q_1|$ where $n=1,2,\ldots$.
 \end{lemma}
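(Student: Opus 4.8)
The plan is to deduce the bound from the structure of functions subordinate to a convex map, after a harmless normalisation. Dividing by $Q_1$, the function $\widetilde{q}:=q/Q_1$ is univalent with convex image and $\widetilde{q}'(0)=1$, while $\widetilde{p}:=p/Q_1\prec\widetilde{q}$; since the coefficients of $\widetilde{p}$ and $\widetilde{q}$ are $P_n/Q_1$ and $Q_n/Q_1$, it is enough to prove $|P_n|\leq1$ under the extra assumption $Q_1=1$. First I would record that the subordination class $\mathcal{S}(q):=\{p:p\prec q\}$ is a compact (normal family) and \emph{convex} subset of $\mathcal{A}$: if $p_1,p_2\prec q$, then $\tfrac12(p_1(z)+p_2(z))$ takes values in the convex domain $q(\Delta)$ and agrees with $q$ at the origin, so, $q$ being univalent onto a convex domain, it is again subordinate to $q$. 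This is the only place the convexity hypothesis on $q$ is genuinely used.

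The heart of the argument is the integral representation: for convex univalent $q$, every $p\prec q$ can be written as
\begin{equation*}
  p(z)=\int_{|x|=1}q(xz)\,{\rm d}\nu(x)\qquad(z\in\Delta)
\end{equation*}
for some probability measure $\nu$ on the unit circle. The rotations $q(xz)$ ($|x|=1$) clearly lie in $\mathcal{S}(q)$, and the content, which I expect to be the main obstacle, is that they are precisely the extreme points of the compact convex set $\mathcal{S}(q)$, so that the representation follows from the Krein--Milman and Choquet theorems. I would establish this (or invoke it as the Hallenbeck--MacGregor representation of subordinate functions) and regard it as the crux of the proof.

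Granting the representation, the estimate is then a short computation. Expanding $q(xz)=\sum_{n\geq1}Q_n x^n z^n$ and integrating term by term yields $P_n=Q_n\int_{|x|=1}x^n\,{\rm d}\nu(x)$, whence
\begin{equation*}
  |P_n|\leq|Q_n|\int_{|x|=1}|x|^n\,{\rm d}\nu(x)=|Q_n|\qquad(n=1,2,\ldots).
\end{equation*}
It remains to bound the coefficients of the convex function itself. Here I would use the classical coefficient theorem of Loewner for convex maps, $|Q_n|\leq|Q_1|$, which one also obtains from the analogous extreme-point representation $q(z)=\int_{|x|=1}\tfrac{z}{1-xz}\,{\rm d}\mu(x)$ of the normalised convex class, giving $Q_n=\int_{|x|=1}x^{n-1}\,{\rm d}\mu(x)$ and hence $|Q_n|\leq1$. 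Combining the two displays gives $|P_n|\leq|Q_n|\leq1$, which after undoing the normalisation is exactly $|P_n|\leq|Q_1|$, as claimed.
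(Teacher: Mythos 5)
Your normalization step and the observation that the subordination class $\mathcal{S}(q)$ is compact and convex are both fine, but the integral representation that you yourself identify as the crux is false, and the approach cannot be repaired. It is not true that every $p\prec q$ with $q$ convex univalent can be written as $p(z)=\int_{|x|=1}q(xz)\,{\rm d}\nu(x)$. Take $q(z)=z$, which is univalent with convex image $\Delta$, and $p(z)=z^{2}$, which is subordinate to $q$ via the Schwarz function $w(z)=z^{2}$; every average of rotations of $q$ equals $\left(\int_{|x|=1}x\,{\rm d}\nu(x)\right)z=cz$ with $|c|\leq1$, so $p$ admits no such representation. In this example $\mathcal{S}(q)$ is the whole Schwarz class, which is already convex and compact, hence equal to the closed convex hull of its own extreme points; since it strictly contains $\{cz:|c|\leq1\}$, its extreme points cannot consist only of the rotations $xz$ (in fact $z^{2}$ is itself extreme). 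The Herglotz-type representation you want to invoke is special to half-plane images --- that is exactly why it holds for the Carath\'{e}odory class $\mathcal{P}$ --- and it fails for bounded convex images. The clearest symptom that the route is doomed is that it proves too much: your computation yields $|P_{n}|\leq|Q_{n}|$ for every $n$, and this inequality is simply false ($|P_{2}|=1>0=|Q_{2}|$ in the example above). Rogosinski's conclusion $|P_{n}|\leq|Q_{1}|$ cannot be reached through the intermediate bound $|P_{n}|\leq|Q_{n}|$.

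For comparison: the paper does not prove the lemma at all (it quotes Rogosinski \cite{Rog}), and the classical proof uses convexity in a more elementary way. Fix $n$, put $\omega=e^{2\pi i/n}$, and form the average
\begin{equation*}
  \frac{1}{n}\sum_{k=0}^{n-1}p(\omega^{k}\zeta)=\sum_{j=1}^{\infty}P_{jn}\zeta^{jn}=:\Psi(\zeta^{n})\qquad(\zeta\in\Delta),
\end{equation*}
where $\Psi$ is analytic in $\Delta$ with $\Psi(0)=0$. Each value of the left-hand side is a convex combination of points of $q(\Delta)$, hence lies in $q(\Delta)$ by the convexity hypothesis; since $q$ is univalent, the subordination principle gives $\Psi\prec q$, say $\Psi=q\circ w$ with $w$ a Schwarz function. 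Comparing first coefficients, $P_{n}=Q_{1}w'(0)$, and the Schwarz lemma gives $|w'(0)|\leq1$, i.e. $|P_{n}|\leq|Q_{1}|$. Note that this argument needs neither Loewner's coefficient theorem for convex maps nor any extreme-point machinery; convexity enters only to ensure that the root-of-unity average of $p$ stays inside $q(\Delta)$.
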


 \begin{theorem}\label{th. est coef}
   Let $f(z)=z+\sum_{n=2}^{\infty}a_n z^n\in \mathcal{A}$ be in the class $\Omega$. Then
   \begin{equation}\label{cef. inq.}
     |a_n|\leq \frac{1}{2(n-1)}\quad(n\geq2).
   \end{equation}
   The result is sharp.
 \end{theorem}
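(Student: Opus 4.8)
The plan is to read off the coefficient bound directly from the subordination characterization in Lemma \ref{lem sub f in Omega} by feeding it into Rogosinski's estimate, Lemma \ref{Rog Lemma}. Since $f\in\Omega$, Lemma \ref{lem sub f in Omega} gives $zf'(z)-f(z)\prec q_{1/2}(z)$, and the decisive observation is that the subordinating function here is extremely simple: with $M=1/2$ and $a=0$ in \eqref{q(z)} one computes $q_{1/2}(z)=z/2$. This is a linear, hence univalent, map carrying $\Delta$ onto the disc $\Delta_{1/2}$, which is convex, so the hypotheses of Lemma \ref{Rog Lemma} are satisfied with $Q_1=1/2$ and $Q_n=0$ for $n\geq2$.

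Next I would identify the Taylor coefficients of the subordinate function. Writing $f(z)=z+\sum_{n=2}^{\infty}a_nz^n$ gives the function $p(z):=zf'(z)-f(z)=\sum_{n=2}^{\infty}(n-1)a_nz^n$, which is analytic in $\Delta$ with $p(0)=0$; in the notation of Lemma \ref{Rog Lemma} this means $P_1=0$ and $P_n=(n-1)a_n$ for $n\geq2$. Applying Lemma \ref{Rog Lemma} to the subordination $p\prec q_{1/2}$ then yields $|P_n|\leq|Q_1|=1/2$ for every $n$, so in particular $(n-1)|a_n|\leq 1/2$, that is $|a_n|\leq 1/(2(n-1))$ for $n\geq2$, which is precisely \eqref{cef. inq.}.

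Finally, for sharpness I would invoke the extremal function $\widetilde{f}_n$ defined in \eqref{fwid}: it belongs to $\Omega$ by Example \ref{exa. widetilde f}, and its $n$th coefficient equals $1/(2(n-1))$, so equality holds in \eqref{cef. inq.}. The step requiring the most care is the verification of the hypotheses of Rogosinski's lemma, namely that $q_{1/2}$ is univalent and maps $\Delta$ onto a convex set, together with keeping the index bookkeeping straight, since the coefficient of $z^n$ in $p$ is $(n-1)a_n$ rather than $a_n$. Beyond these bookkeeping points the conclusion is an essentially immediate consequence of the two preceding lemmas.
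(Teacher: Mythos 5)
Your proposal is correct and follows exactly the paper's own argument: the subordination $zf'(z)-f(z)=\sum_{n=2}^{\infty}(n-1)a_nz^n\prec q_{1/2}(z)=\tfrac{1}{2}z$ from Lemma \ref{lem sub f in Omega}, combined with Rogosinski's Lemma \ref{Rog Lemma} (noting $q_{1/2}$ is convex univalent with $Q_1=1/2$), gives $(n-1)|a_n|\leq 1/2$, with sharpness witnessed by $\widetilde{f}_n$ from \eqref{fwid}. Your extra care with the index bookkeeping ($P_n=(n-1)a_n$) and the explicit verification of Rogosinski's hypotheses are the only points where you elaborate beyond the paper's more terse presentation.
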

\begin{proof}
  Let $f$ be of the form \eqref{f(z)=z+a2z2+} belongs to the class $\Omega$. Then by Lemma \ref{lem sub f in Omega}, we have
  \begin{equation}\label{sub q1/2}
    zf'(z)-f(z)=\sum_{n=2}^{\infty}(n-1)a_n z^n\prec \frac{1}{2}z=:q_{1/2}(z)\quad(z\in\Delta).
  \end{equation}
  Since $q_{1/2}$ is convex univalent, thus applying the Lemma \ref{Rog Lemma} we get
  \begin{equation*}
    |(n-1)a_n|\leq \frac{1}{2}\quad(n\geq2).
  \end{equation*}
  Therefore the inequality \eqref{cef. inq.} holds. It is easy to see that the result is sharp for the function $\widetilde{f}_n$, where $\widetilde{f}_n$ is defined in \eqref{fwid}. This completes the proof.
\end{proof}
\section{Fekete--Szeg\"{o} problem and Teoplitz matrices}\label{sec. fek-sze}
In recent years, the problem of finding sharp upper bounds for the Fekete--Szeg\"{o} coefficient functional associated with the $k$--th root transform has been studied by many scholars (see for example \cite{ALI}, \cite{KESBooth}, \cite{Sri}). For a univalent function $f$ of the form \eqref{f(z)=z+a2z2+}, the $k$--th root
transform is defined by
\begin{equation}\label{F(z)}
 F_k(z):=(f(z^k))^{1/k}=z+\sum_{n=1}^{\infty}b_{kn+1}z^{kn+1}\quad
 (z\in \Delta).
\end{equation}
A simple calculation gives that, for $f$ given by \eqref{f(z)=z+a2z2+},
\begin{equation}\label{FF(z)}
 (f(z^{k}))^{1/k}=z+\frac{1}{k}a_2z^{k+1}
 +\left(\frac{1}{k}a_3-\frac{1}{2}\frac{k-1}{k^2}a_2^2\right)z^{2k+1}+\cdots.
\end{equation}
Equating the coefficients of \eqref{F(z)} and \eqref{FF(z)}, we have
\begin{equation}\label{bk}
 b_{k+1}=\frac{1}{k}a_2\quad {\rm and}\quad b_{2k+1}=\frac{1}{k}a_3-\frac{1}{2}\frac{k-1}{k^2}a_2^2.
\end{equation}
 In the sequel, we obtain this problem for the class $\Omega$. Further we denote by $\mathcal{P}$ the well--known class of analytic functions $p$ with $p(0)=1$ and ${\rm Re}\{p(z)\}>0$ where $z\in\Delta$. Functions in $\mathcal{P}$ are called Carath\'{e}odory functions.
The following lemma due to Keogh and Merkes \cite{KM} will be useful in this section.
\begin{lemma}\label{FEK}
Let the function $p(z)$ given by
\begin{equation*}
 p(z)=1+p_1z+p_2z^2+\cdots,
\end{equation*}
be in the class $\mathcal{P}$. Then, for any complex number $\mu$
\begin{equation*}
 |p_2-\mu p_1^2|\leq 2\max\{1,|2\mu-1|\}.
\end{equation*}
The result is sharp.
\end{lemma}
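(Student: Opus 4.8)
The plan is to reduce the functional to the coefficient bounds of a Schwarz function. First I would pass from $p\in\mathcal{P}$ to its associated Schwarz function by setting $w(z)=\frac{p(z)-1}{p(z)+1}$; since ${\rm Re}\{p(z)\}>0$ and $p(0)=1$, the function $w$ is analytic in $\Delta$ with $w(0)=0$ and $|w(z)|<1$, so it admits an expansion $w(z)=w_1z+w_2z^2+\cdots$. Inverting, $p(z)=\frac{1+w(z)}{1-w(z)}=1+2w(z)+2w(z)^2+\cdots$, and comparing the first two nontrivial coefficients yields $p_1=2w_1$ and $p_2=2w_2+2w_1^2$.

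Substituting these into the functional, I would obtain $p_2-\mu p_1^2 = 2w_2 + 2w_1^2 - 4\mu w_1^2 = 2w_2 + (2-4\mu)w_1^2$. The essential input is then the Schwarz lemma applied to $w(z)/z$, which maps $\Delta$ into $\overline{\Delta}$ and takes the value $w_1$ at the origin: the Schwarz--Pick estimate gives $|w_1|\le 1$ together with $|w_2|\le 1-|w_1|^2$. Applying the triangle inequality and these bounds, I would estimate $|p_2-\mu p_1^2|\le 2|w_2|+|2-4\mu|\,|w_1|^2 \le 2(1-|w_1|^2) + 2|2\mu-1|\,|w_1|^2$, where I have used $|2-4\mu|=2|2\mu-1|$.

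Writing $t=|w_1|^2\in[0,1]$, the right-hand side becomes $2 + 2t(|2\mu-1|-1)$, which is linear in $t$, so maximizing over $[0,1]$ is elementary. If $|2\mu-1|\ge 1$ the coefficient of $t$ is nonnegative and the maximum occurs at $t=1$, giving the value $2|2\mu-1|$; if $|2\mu-1|<1$ the coefficient is negative and the maximum occurs at $t=0$, giving the value $2$. In both cases the upper bound is precisely $2\max\{1,|2\mu-1|\}$, which proves the inequality.

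For the sharpness claim I would exhibit explicit extremals. When $|2\mu-1|\ge 1$, the function $p(z)=\frac{1+z}{1-z}$ (corresponding to $w(z)=z$, so $p_1=p_2=2$) yields $|p_2-\mu p_1^2|=|2-4\mu|=2|2\mu-1|$; when $|2\mu-1|\le 1$, the function $p(z)=\frac{1+z^2}{1-z^2}$ (corresponding to $w(z)=z^2$, so $p_1=0$ and $p_2=2$) yields $|p_2-\mu p_1^2|=2$. The only step requiring genuine care is the Schwarz-function coefficient bound $|w_2|\le 1-|w_1|^2$, which I would derive from the Schwarz--Pick lemma applied to $w(z)/z$; the remainder of the argument is straightforward bookkeeping and an elementary one-variable optimization.
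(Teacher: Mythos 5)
Your argument is correct, but there is nothing in the paper to compare it against: the paper does not prove this lemma at all, it simply imports it from Keogh and Merkes \cite{KM} (``The following lemma due to Keogh and Merkes \cite{KM} will be useful in this section''). What you have written out is, in essence, the standard proof of that cited result: the passage $p\mapsto w=(p-1)/(p+1)$, the identities $p_1=2w_1$, $p_2=2w_2+2w_1^2$, hence $p_2-\mu p_1^2=2w_2+(2-4\mu)w_1^2$, the Schwarz-function coefficient bound $|w_2|\le 1-|w_1|^2$, and an elementary linear optimization in $t=|w_1|^2$; your two extremal functions ($w(z)=z$ when $|2\mu-1|\ge 1$, $w(z)=z^2$ when $|2\mu-1|\le 1$) settle sharpness in both regimes, so the proof is complete. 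One step deserves one more sentence of care, precisely the one you flagged: Schwarz--Pick is stated for self-maps of the \emph{open} disc, while $\phi(z)=w(z)/z$ is a priori only known to satisfy $|\phi|\le 1$ on $\Delta$. By the maximum principle, either $|\phi|<1$ throughout $\Delta$, in which case Schwarz--Pick gives $|w_2|=|\phi'(0)|\le 1-|\phi(0)|^2=1-|w_1|^2$, or $\phi$ is a unimodular constant, in which case $w_2=0$ and the bound holds trivially. Alternatively, you could shortcut this by quoting the classical Carath\'{e}odory-class inequality $\left|p_2-\tfrac{1}{2}p_1^2\right|\le 2-\tfrac{1}{2}|p_1|^2$ (which is exactly $|w_2|\le 1-|w_1|^2$ in disguise) together with $|p_1|\le 2$, writing $p_2-\mu p_1^2=\left(p_2-\tfrac{1}{2}p_1^2\right)+\left(\tfrac{1}{2}-\mu\right)p_1^2$ and optimizing over $|p_1|^2\in[0,4]$; this is how the estimate is usually packaged in the literature, and it yields the same bound $2\max\{1,|2\mu-1|\}$.
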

\begin{theorem}\label{fekete-szego th.}
  Let the function $f$ of the form \eqref{f(z)=z+a2z2+} belongs to the class $\Omega$. Then for any complex number $\mu$ and $k\in\{1,2,3,\ldots\}$, we have
  \begin{equation}\label{fek k}
    \left|b_{2k+1}-\mu b_{k+1}^2\right|\leq \frac{1}{4k}\max \left\{1,\left|\frac{2\mu+k-1}{2k}\right|\right\},
  \end{equation}
  where $b_{2k+1}$ and $b_{k+1}$ are defined in \eqref{bk}.
  The result is sharp.
\end{theorem}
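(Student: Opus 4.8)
The plan is to reduce the $k$th-root functional to the classical Fekete--Szeg\"{o} quantity $a_3-\nu a_2^2$ for the original coefficients of $f$, and then to invoke the Keogh--Merkes estimate (Lemma \ref{FEK}). First I would substitute the relations \eqref{bk}, namely $b_{k+1}=a_2/k$ and $b_{2k+1}=a_3/k-\tfrac{k-1}{2k^2}a_2^2$, into the left-hand side and factor out $1/k$, obtaining
\[
b_{2k+1}-\mu b_{k+1}^2=\frac{1}{k}\left(a_3-\frac{2\mu+k-1}{2k}\,a_2^2\right).
\]
Writing $\nu:=(2\mu+k-1)/(2k)$, so that $|\nu|=|2\mu+k-1|/(2k)$, the claim \eqref{fek k} becomes equivalent to the sharp bound $|a_3-\nu a_2^2|\le\tfrac14\max\{1,|\nu|\}$ for $f\in\Omega$, after multiplying by $1/k$ and matching the two branches of the maximum.

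To obtain this bound I would use the subordination characterization of Lemma \ref{lem sub f in Omega}: $f\in\Omega$ precisely when $zf'(z)-f(z)=\tfrac12 w(z)$ for some Schwarz function $w$. Since $zf'(z)-f(z)=\sum_{n\ge2}(n-1)a_n z^n$ has no linear term, matching coefficients forces $w'(0)=0$, so $w(z)=w_2z^2+w_3z^3+\cdots$ and
\[
a_2=\tfrac12 w_2,\qquad a_3=\tfrac14 w_3.
\]
The essential step here, and the one I expect to be the main obstacle, is that $w$ cannot be fed directly into a Carath\'{e}odory function: because $w'(0)=0$ this would give $p_1=0$ and collapse the Keogh--Merkes combination into a mere coefficient bound on $w_2$, which does not control the mixed term $w_3-\nu w_2^2$. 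Instead I would first pass to $\psi(z):=w(z)/z$; since $w$ is a Schwarz function vanishing to second order, $\psi$ is again a Schwarz function with $\psi(0)=0$, and $w_2=\psi_1$, $w_3=\psi_2$.

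Setting $p(z)=(1+\psi(z))/(1-\psi(z))\in\mathcal{P}$ then yields the standard relations $\psi_1=p_1/2$ and $\psi_2=\tfrac12 p_2-\tfrac14 p_1^2$, so that
\[
a_3-\nu a_2^2=\tfrac14\left(w_3-\nu w_2^2\right)=\tfrac18\left(p_2-\tfrac{1+\nu}{2}\,p_1^2\right).
\]
Applying Lemma \ref{FEK} with the parameter $(1+\nu)/2$ in place of $\mu$ gives $|p_2-\tfrac{1+\nu}{2}p_1^2|\le 2\max\{1,|\nu|\}$, whence $|a_3-\nu a_2^2|\le\tfrac14\max\{1,|\nu|\}$, which is exactly \eqref{fek k} once the factor $1/k$ is restored. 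For the sharpness I would exhibit the extremal functions from \eqref{fwid}: taking $w(z)=z^3$, i.e.\ $f=\widetilde{f}_3$ (so $a_2=0$, $a_3=\tfrac14$), saturates the bound on the range $|\nu|\le1$, while $w(z)=z^2$, i.e.\ $f=\widetilde{f}_2$ (so $a_2=\tfrac12$, $a_3=0$), saturates it when $|\nu|\ge1$; a direct check that these give $|b_{2k+1}-\mu b_{k+1}^2|=\tfrac{1}{4k}$ and $\tfrac{|\nu|}{4k}$ respectively confirms equality on both branches.
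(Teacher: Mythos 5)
Your proof is correct and follows essentially the same route as the paper's: both pass through the subordination characterization of Lemma \ref{lem sub f in Omega}, turn the Schwarz function (with the factor $z$ divided out) into a Carath\'{e}odory function, and apply Lemma \ref{FEK} with the very same parameter $(3k+2\mu-1)/(4k)$. The only differences are organizational and to your credit: you reduce first to the classical functional $a_3-\nu a_2^2$ with $\nu=(2\mu+k-1)/(2k)$, you justify explicitly why $w(z)/z$ is again a Schwarz function (a point the paper absorbs silently by writing $zf'(z)-f(z)=\tfrac{1}{2}zw(z)$ in \eqref{eq1}), and you actually verify sharpness via $\widetilde{f}_2$ and $\widetilde{f}_3$ from \eqref{fwid}, which the paper asserts but never proves.
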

\begin{proof}
  If $f\in \Omega$, then by Lemma \ref{lem sub f in Omega} and definition of subordination there exits a Schwarz function $w(z)$ such that
  \begin{equation}\label{eq1}
    zf'(z)-f(z)=\frac{1}{2}z w(z)\quad(z\in\Delta).
  \end{equation}
  If we define the function $p$ as follows
  \begin{equation}\label{p(z)}
    p(z)=\frac{1+w(z)}{1-w(z)}=1+p_1z+p_2z^2+\cdots\quad(z\in\Delta),
  \end{equation}
  thus $p(z)$ is a analytic function in $\Delta$ and $p(0)=1$. A simple calculation gives us
  \begin{equation}\label{w(z)}
    w(z)=\frac{1}{2}p_1z+\frac{1}{2}\left(p_2-\frac{1}{2}p_1^2\right)z^2
    +\frac{1}{2}\left(p_3-p_1p_2+\frac{1}{4}p_1^3\right)z^3+\cdots\quad(z\in\Delta).
  \end{equation}
  From \eqref{eq1}--\eqref{w(z)}, equating coefficients gives, after simplification,
\begin{equation}\label{a2}
    a_2 = \frac{1}{4}p_1,
\end{equation}
    \begin{equation}\label{a3}
    2a_3 = \frac{1}{4}\left(p_2-\frac{1}{2}p_1^2\right).
    \end{equation}
Replacing \eqref{a2} and \eqref{a3} into \eqref{bk}, we get
  \begin{equation*}\label{b2k}
    b_{k+1}=\frac{1}{4k}p_1\quad{\rm and}\quad b_{2k+1}=\frac{1}{8k}
    \left(p_2-\frac{1}{2}p_1^2\right)-\frac{1}{32}\frac{k-1}{k^2}p_1^2.
  \end{equation*}
Thus
\begin{equation*}
  b_{2k+1}-\mu b_{k+1}^2=\frac{1}{8k}\left(p_2-\frac{1}{2}\left(\frac{3k+2\mu-1}{2k}\right)p_1^2\right).
\end{equation*}
If we let $\mu'=\frac{1}{2}\left(\frac{3k+2\mu-1}{2k}\right)$, then as an application of the Lemma \ref{FEK}, we get the desired inequality \eqref{fek k}.
\end{proof}
Putting $k=1$ in the Theorem \ref{fekete-szego th.}, we have.
\begin{theorem}\label{th. fekSz}
  (Fekete--Szeg\"{o} problem) Let $f$ be of the form \eqref{f(z)=z+a2z2+}
  belongs to the class $\Omega$. Then we have the following sharp inequality
\begin{equation*}
     \left|a_3-\mu a_2^2\right|\leq \frac{1}{4}\max \{1,|\mu|\}\quad(\mu\in \mathbb{C}).
   \end{equation*}
\end{theorem}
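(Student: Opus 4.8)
The plan is to obtain Theorem~\ref{th. fekSz} as the direct specialization of Theorem~\ref{fekete-szego th.} to the case $k=1$, so no new machinery is required. First I would observe that setting $k=1$ in the $k$-th root transform \eqref{F(z)} gives $F_1(z)=f(z)$, so the transformed coefficients collapse onto the original ones. Concretely, reading off \eqref{bk} at $k=1$ yields $b_{k+1}=b_2=a_2$ and $b_{2k+1}=b_3=a_3-\tfrac{1}{2}\cdot\tfrac{k-1}{k^2}a_2^2$, where the factor $(k-1)$ vanishes, so that $b_3=a_3$ exactly. Hence the functional $b_{2k+1}-\mu b_{k+1}^2$ reduces to $a_3-\mu a_2^2$.

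Next I would substitute $k=1$ into the sharp estimate \eqref{fek k} furnished by Theorem~\ref{fekete-szego th.}. The right-hand side becomes $\tfrac{1}{4}\max\{1,\,|(2\mu+1-1)/2|\}=\tfrac{1}{4}\max\{1,|\mu|\}$, while the left-hand side becomes $|a_3-\mu a_2^2|$ by the coefficient identification above. Combining these gives precisely the asserted inequality $|a_3-\mu a_2^2|\le \tfrac{1}{4}\max\{1,|\mu|\}$ for every $\mu\in\mathbb{C}$.

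There is essentially no obstacle, since all the analytic content, namely the Schwarz-function representation \eqref{eq1}, the passage to a Carath\'eodory function in \eqref{p(z)}, and the invocation of the Keogh--Merkes bound (Lemma~\ref{FEK}), has already been carried out in Theorem~\ref{fekete-szego th.}. The only point deserving a line of verification is that the $(k-1)$ term in $b_{2k+1}$ drops out at $k=1$, which is what makes the simplification clean. Finally, sharpness is inherited directly from the sharpness claimed in Theorem~\ref{fekete-szego th.} (equivalently, from the extremal case of Lemma~\ref{FEK}), so no separate extremal function need be exhibited here.
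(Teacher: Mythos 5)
Your proposal is correct and is exactly the paper's own route: the paper proves Theorem~\ref{th. fekSz} by the single remark ``Putting $k=1$ in Theorem~\ref{fekete-szego th.}, we have,'' which is precisely your specialization, with your computations $b_2=a_2$, $b_3=a_3$, and $\frac{2\mu+k-1}{2k}\big|_{k=1}=\mu$ filling in the (omitted) details correctly. Sharpness is likewise inherited from Theorem~\ref{fekete-szego th.}, just as you argue.
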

Since every function $f$ belongs to the class $\Omega$ is univalent, and every univalent function has an inverse $f^{-1}$, which is defined by $f^{-1}(f(z))= z$ ($z\in\Delta$) and
\begin{equation*}
  f(f^{-1}(w))=w\qquad (|w|<r_0;\ \ r_0\geq 1/4),
\end{equation*}
where
\begin{equation}\label{f-1}
  f^{-1}(w)=w-a_2w^2+(2a_2^2-a_3)w^3-(5a_2^3-5a_2a_3+a_4)w^4+\cdots,
\end{equation}
thus it is natural to consider the following result.
\begin{corollary}\label{c3.4}
Let $f\in \mathcal{A}$ be in the class $\Omega$. Also
let the function $f^{-1}(w)=w+\sum_{n=2}^{\infty}b_nw^n$ be inverse of $f$. Then we have the following sharp inequalities
\begin{equation*}\label{b2}
  |b_i|\leq \frac{1}{2}\quad(i=2,3)
\end{equation*}
and $|b_4|\leq 19/24$.
\end{corollary}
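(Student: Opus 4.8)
The plan is to read off $b_2,b_3,b_4$ directly from the inversion formula \eqref{f-1} and then estimate each one using only the sharp coefficient bound of Theorem \ref{th. est coef} together with the Fekete--Szeg\"{o} inequality of Theorem \ref{th. fekSz}. From \eqref{f-1} we have
\[
  b_2=-a_2,\qquad b_3=2a_2^2-a_3,\qquad b_4=-\left(5a_2^3-5a_2a_3+a_4\right).
\]
No new representation of $f$ is needed; everything reduces to known facts about the $a_n$.

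For $b_2$ I would simply note $|b_2|=|a_2|\leq \frac12$ by Theorem \ref{th. est coef}. For $b_3$ I would rewrite $b_3=-(a_3-2a_2^2)$ and invoke Theorem \ref{th. fekSz} with $\mu=2$, which gives
\[
  |b_3|=\left|a_3-2a_2^2\right|\leq \frac14\max\{1,|2|\}=\frac12 .
\]
Both bounds are attained by the extremal function $\widetilde{f}_2(z)=z+\tfrac12 z^2$ of \eqref{fwid}: there $a_2=\tfrac12$, $a_3=0$, whence $b_2=-\tfrac12$ and $b_3=\tfrac12$, so these two inequalities are sharp.

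The decisive step is $b_4$. Bounding $5a_2^3-5a_2a_3+a_4$ term by term is too wasteful (it produces $\tfrac{17}{12}$), so the plan is to regroup the terms so that a single Fekete--Szeg\"{o} functional appears:
\[
  5a_2^3-5a_2a_3+a_4=a_4-5a_2\left(a_3-a_2^2\right).
\]
The triangle inequality then yields $|b_4|\leq |a_4|+5|a_2|\,|a_3-a_2^2|$. Applying $|a_4|\leq \tfrac16$ and $|a_2|\leq \tfrac12$ from Theorem \ref{th. est coef}, and $|a_3-a_2^2|\leq \tfrac14\max\{1,1\}=\tfrac14$ from Theorem \ref{th. fekSz} with $\mu=1$, I obtain
\[
  |b_4|\leq \frac16+5\cdot\frac12\cdot\frac14=\frac16+\frac58=\frac{19}{24}.
\]

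The main obstacle is precisely this regrouping: the entire gain over the naive estimate comes from absorbing $a_3-a_2^2$ into one Fekete--Szeg\"{o} functional rather than estimating the products $a_2^3$ and $a_2a_3$ in isolation. Once the identity $5a_2^3-5a_2a_3+a_4=a_4-5a_2(a_3-a_2^2)$ is written down, the proof is immediate and no heavier machinery (for instance the Carath\'{e}odory parametrization of the associated function $p$) is required. I would also remark that, as is visible from the statement, the sharpness assertion is claimed only for $b_2$ and $b_3$; the bound on $b_4$ need not be declared sharp.
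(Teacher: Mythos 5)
Your proposal is correct and follows essentially the same route as the paper's own proof: read $b_2,b_3,b_4$ off the inversion formula, bound $|b_2|$ by Theorem \ref{th. est coef}, treat $b_3$ as the Fekete--Szeg\"{o} functional with $\mu=2$, and regroup $b_4=\pm\bigl(a_4-5a_2(a_3-a_2^2)\bigr)$ so that the $\mu=1$ case of Theorem \ref{th. fekSz} applies together with $|a_2|\leq\tfrac12$ and $|a_4|\leq\tfrac16$. Your only addition is the explicit verification of sharpness for $b_2,b_3$ via $\widetilde{f}_2$, which the paper asserts but does not check.
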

\begin{proof}
  Relation \eqref{f-1} gives us
  \begin{equation*}
    b_2=-a_2,\, b_3=2a_2^2-a_3\quad {\rm and}\quad b_4=-(5a_2^3-5a_2a_3+a_4).
  \end{equation*}
  Thus, by putting $n=2$ in \eqref{cef. inq.} we can get
  \begin{equation*}
    |b_2|=|a_2|\leq 1/2.
  \end{equation*}
  For estimate of $|b_3|$, it suffices in Theorem \ref{th. fekSz}, we put $\mu=2$. Finally, since
  $b_4=5a_2(a_3-a_2^2)-a_4$ by the Fekete--Szeg\"{o} problem (Theorem \ref{th. fekSz}) for $\mu=1$, we get
  \begin{align*}
    |b_4| &=|5a_2(a_3-a_2^2)-a_4| \\
    &\leq 5|a_2||a_3-a_2^2|+|a_4|\\
    &\leq \frac{5}{2}\times\frac{1}{4}+\frac{1}{6}=\frac{19}{24}
  \end{align*}
   and concluding the proof.
\end{proof}
Following, we recall the symmetric Toeplitz determinant
\begin{equation*}
T_q(n):=\left|
 \begin{array}{cccc}
  a_n &a_{n+1} &\ldots &a_{n+q-1} \\
  a_{n+1} &a_n  &\ldots  &a_{n+q-2}  \\
  \vdots & \vdots & \vdots & \vdots \\
  a_{n+q-1}& a_{n+q-2} & \ldots &a_n  \\
 \end{array}
\right|,
\end{equation*}
where $n,q\in\{1,2,3,\ldots\}$ and $a_1=1$. Toeplitz matrices are one of the most well--studied and understood classes of structured matrices. Also, they have many applications in all branches of pure and applied mathematics (see for more details Ye and Lim \cite[Section 2]{ye}). In the next result, we obtain sharp bounds for the coefficient body $\left|T_q(n)\right|$, $q=2,3$ and $n=1,2$ where the entries of $T_q(n)$ are the coefficients of functions $f$ of form \eqref{f(z)=z+a2z2+} that are in the class $\Omega$.
\begin{theorem}
  Let $f$ be of the form \eqref{f(z)=z+a2z2+} belongs to the class $\Omega$. Then we have
  \begin{enumerate}
    \item  $|T_2(n)|\leq \frac{1}{4(n-1)^2}+\frac{1}{4n^2}$\quad($n\geq2$).
    \item  $|T_3(1)|\leq \frac{13}{8}$.
    \item  $|T_3(2)|\leq \frac{329}{549}$.
  \end{enumerate}
  All the inequalities are sharp.
\end{theorem}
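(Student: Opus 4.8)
The plan is to reduce every determinant to an explicit polynomial in the coefficients $a_2,a_3,a_4$ and then estimate that polynomial using the tools already established: the sharp bound \eqref{cef. inq.}, the Fekete--Szeg\"o inequality of Theorem \ref{th. fekSz}, and the Carath\'eodory representation underlying \eqref{a2}--\eqref{a3}. I will use repeatedly the value of a symmetric $3\times3$ Toeplitz determinant,
\begin{equation*}
  \begin{vmatrix}\alpha&\beta&\gamma\\ \beta&\alpha&\beta\\ \gamma&\beta&\alpha\end{vmatrix}
  =(\alpha-\gamma)\bigl(\alpha^2+\alpha\gamma-2\beta^2\bigr).
\end{equation*}

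Part (1) is elementary. Here the matrix is $2\times2$ with diagonal $a_n$ and off--diagonal $a_{n+1}$, so $T_2(n)=a_n^2-a_{n+1}^2$, whence $|T_2(n)|\le|a_n|^2+|a_{n+1}|^2$; inserting \eqref{cef. inq.} for the indices $n$ and $n+1$ gives exactly $\frac{1}{4(n-1)^2}+\frac{1}{4n^2}$. For sharpness I would look for a member of $\Omega$ of the form $z+a_nz^n+a_{n+1}z^{n+1}$ whose two coefficients are phased (one real, one imaginary) so that the squares add rather than cancel, checking admissibility against the defining inequality; this is the most delicate point of part (1), since the naive candidate $\widetilde{f}_n$ already saturates the coefficient budget on $a_n$ and leaves no room for $a_{n+1}$.

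For parts (2) and (3) I apply the determinant identity with $a_1=1$, obtaining
\begin{equation*}
  T_3(1)=(1-a_3)\bigl(1+a_3-2a_2^2\bigr),\qquad
  T_3(2)=(a_2-a_4)\bigl(a_2^2+a_2a_4-2a_3^2\bigr).
\end{equation*}
I then substitute the Carath\'eodory data: from \eqref{a2} and \eqref{a3}, $a_2=\tfrac14p_1$ and $a_3=\tfrac18\bigl(p_2-\tfrac12p_1^2\bigr)$, and the order--four analogue gives $a_4=\tfrac1{12}\bigl(p_3-p_1p_2+\tfrac14p_1^3\bigr)$, with $p\in\mathcal P$. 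Invoking the Libera--Z\l otkiewicz formulas $2p_2=p_1^2+(4-p_1^2)x$ and the matching expression for $p_3$ (with $|x|,|y|\le1$) converts each $|T_3(\cdot)|$ into the modulus of a concrete polynomial in $p_1,x,y$.

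The crux is the constrained extremization of these polynomials. For $T_3(1)$ only $p_1$ and $x$ occur, and after maximizing over the argument of $x$ for fixed moduli the problem reduces to a single calculus problem; the subtle point is that although $\Omega$ is invariant under $f(z)\mapsto e^{-i\theta}f(e^{i\theta}z)$, the functional $T_3(1)$ is \emph{not}, so one cannot simply declare $p_1$ real---one must retain the rotation angle as a free parameter and optimize over the whole orbit. For $T_3(2)$ all three quantities $p_1,x,y$ persist; my route is to bound $|y|\le1$ first, reduce to a function of $p_1$ and $x$, maximize over $\arg x$ and $|x|$, and finally maximize in $p_1\in[0,2]$. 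I expect the awkward constant $329/549$ to emerge as the value of a rational function of $p_1$ at an interior critical point, so this last extremization---together with the separate task of exhibiting functions in $\Omega$ that attain each bound, i.e. sharpness---is where essentially all the difficulty resides.
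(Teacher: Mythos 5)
Your treatment of part (1) is exactly the paper's: $T_2(n)=a_n^2-a_{n+1}^2$, the triangle inequality, and \eqref{cef. inq.}. Moreover your suspicion about its sharpness is correct and can be made precise: writing $zf'(z)-f(z)=\tfrac12 w(z)$, where $w(z)=\sum_{k\ge2}2(k-1)a_kz^k$ is a Schwarz function, the area inequality $\sum_{k\ge2}4(k-1)^2|a_k|^2\le1$ shows that $|a_n|=\tfrac{1}{2(n-1)}$ forces $a_{n+1}=0$. Hence the bound in (1) can never be attained, no choice of phases rescues it, and the theorem's ``sharpness'' assertion fails there; the paper, for its part, offers no sharpness argument for any of the three bounds.

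The genuine gap is in your plan for (2) and (3): you treat $13/8$ and $329/549$ as true maxima to be located by a Libera--Z\l otkiewicz extremization, but they are not the maxima of the corresponding functionals --- they are artifacts of chained, lossy triangle inequalities, and that is how the paper obtains them. For (2) the paper writes $T_3(1)=1-2a_2^2+a_3(2a_2^2-a_3)$ and estimates $|T_3(1)|\le 1+2|a_2|^2+|a_3|\,|2a_2^2-a_3|\le 1+\tfrac12+\tfrac14\cdot\tfrac12=\tfrac{13}{8}$, combining $|a_2|\le\tfrac12$, $|a_3|\le\tfrac14$ and Theorem \ref{th. fekSz} with $\mu=2$. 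For (3) it bounds the two factors of $T_3(2)=(a_2-a_4)(a_2^2-2a_3^2+a_2a_4)$ \emph{separately}: $|a_2-a_4|\le\tfrac12+\tfrac16=\tfrac23$, and the second factor termwise in the Carath\'eodory coefficients via $|p_1|\le2$, $|p_2-\tfrac12p_1^2|\le2$, $|p_3-p_1p_2|\le2$ (its arithmetic there is itself shaky, which only underlines that $329/549$ is not a natural extremal value). Equality throughout such a chain would require, e.g., $|a_2|=\tfrac12$ and $|a_3|=\tfrac14$ simultaneously, which the area inequality above rules out; since the closed class is compact and the functionals are coefficient-continuous, the true suprema of $|T_3(1)|$ and $|T_3(2)|$ over $\Omega$ lie \emph{strictly} below $13/8$ and $329/549$ (for instance $f(z)=z+\tfrac{i}{2}z^2$ gives $T_3(1)=\tfrac32$, and no member of $\Omega$ reaches $\tfrac{13}{8}$). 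Consequently your optimization can never exhibit $329/549$ as an interior critical value, and the extremal functions whose construction you defer to the end do not exist: the plan as described cannot terminate at the stated bounds. What it could do, if carried through, is prove strictly stronger inequalities --- a substantially harder problem than anything in the paper, and one that would refute the stated sharpness rather than confirm it. To prove the inequalities actually stated, drop the extremization and run the crude estimates above. (Two side remarks: your caveat that $T_3(1)$ is not rotation-invariant, so $p_1$ cannot be normalized real, is well taken; and your expansion $a_4=\tfrac1{12}\bigl(p_3-p_1p_2+\tfrac14p_1^3\bigr)$ is the correct one --- the paper's $\tfrac13p_1^3$ is a slip.)
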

\begin{proof}
  (1) If $f\in \Omega$, then by Theorem \ref{th. est coef} and by the definition of the symmetric Toeplitz determinant $T_q(n)$ we have
  \begin{equation*}
    |T_2(n)|=\left|a_n^2-a_{n+1}^2\right|\leq\left|a_n\right|^2+\left
    |a_{n+1}\right|^2\leq \frac{1}{4(n-1)^2}+\frac{1}{4n^2}.
  \end{equation*}
  (2) It is clear that $|T_3(1)|=\left|1-2a_2^2+2a_3a_2^2-a_3^2\right|$. Thus
  \begin{align*}
    |T_3(1)|&\leq 1+2|a_2|^2+|a_3||2a_2^2-a_3| \\
    &\leq 1+2(1/4)+(1/4)(1/2)=13/8.
  \end{align*}
  Note that the Fekete--Szeg\"{o} problem is used, (see Theorem \ref{th. fekSz} with $\mu=2$).\\
  (3) We have
  \begin{equation*}
    T_3(2)=\left|
       \begin{array}{ccc}
         a_2 & a_3 & a_4 \\
         a_3 & a_2 & a_3 \\
         a_4 & a_3 & a_2 \\
       \end{array}
     \right|.
  \end{equation*}
  Thus we get $T_3(2)=(a_2-a_4)(a_2^2-2a_3^2+a_2a_4)$. For any $f\in \Omega$, we have $|a_2-a_4|\leq |a_2|+|a_4|\leq 1/2+1/6=2/3$. Now, it is enough to obtain the maximum of $|a_2^2-2a_3^2+a_2a_4|$ when $f\in \Omega$. By \eqref{a2}, \eqref{a3} and since
\begin{equation*}\label{a4}
    3a_4=\frac{1}{4}\left(p_3-p_1p_2+\frac{1}{3}p_1^3\right),
\end{equation*}
we get
\begin{align*}
  \left|a_2^2-2a_3^2+a_2a_4\right| &= \left|\frac{1}{16}p_1^2-\frac{1}{8}\left(p_2-\frac{1}{2}p_1^2\right)^2+\frac{1}{48}p_1
  \left(p_3-p_1p_2+\frac{1}{3}p_1^3\right)\right| \\
  &\leq \frac{1}{16}|p_1|^2+\frac{1}{8}\left|p_2-\frac{1}{2}p_1^2\right|^2+\frac{1}{48}
  |p_1||p_3-p_1p_2|+\frac{1}{144}|p_1|^4\\
  &\leq \frac{4}{16}+\frac{4}{8}+\frac{(2)(2)}{48}+\frac{16}{144}=\frac{329}{366}.
\end{align*}
Because $|p_3-p_1p_2|\leq2$ and $\left|p_2-\frac{1}{2}p_1^2\right|^2\leq 4$ (by Lemma \ref{FEK}). Thus $|T_3(2)|\leq \frac{2}{3}\times\frac{329}{366}=\frac{329}{549}$. Here, the proof ends.
\end{proof}

\end{document}